\documentclass{amsart}
\usepackage{amssymb,latexsym,enumerate,verbatim,url,mathabx}
\newtheorem{Thm}{Theorem}[section]
\newtheorem{Prop}[Thm]{Proposition}
\newtheorem{Cor}[Thm]{Corollary}
\newtheorem{Lem}[Thm]{Lemma}

\theoremstyle{remark}

\theoremstyle{problem}

\numberwithin{equation}{section}
\textwidth=125mm
\textheight=195mm
\begin{document}

\title[$p$-restriction and $p$-spectral synthesis]
{Sets of $p$-restriction and $p$-spectral synthesis}

\author[M. J. Puls ]{Michael J. Puls }
\address{Department of Mathematics \\
John Jay College-CUNY \\
524 West 59th Street \\
New York, NY 10019 \\
USA}
\email{mpuls@jjay.cuny.edu}

\begin{abstract}
In this paper we investigate the restriction problem. More precisely, we give sufficient conditions for the failure of a set $E$ in $\mathbb{R}^n$ to have the $p$-restriction property. We also extend the concept of spectral synthesis to $L^p(\mathbb{R}^n)$ for sets of $p$-restriction when $p > 1$. We use our results to show that there are $p$-values for which the unit sphere is a set of $p$-spectral synthesis in $\mathbb{R}^n$ when $n \geq 3$.  
\end{abstract}

\keywords{$p$-spectral synthesis, restriction problem, set of $p$-restriction, span of translates}
\subjclass[2010]{Primary: 42B10; Secondary: 43A15}

\date{March 30, 2019}
\maketitle

\section{Introduction}\label{Introduction}
Throughout this paper $\mathbb{R}$ will denote the real numbers and $\mathbb{Z}$ will denote the integers. Let $p \in [1, \infty]$ and let $n$ be a positive integer. Indicate by $L^p(\mathbb{R}^n)$ the usual Lebesgue space and denote by $\Vert \cdot \Vert_{L^p}$ the usual Banach space norm on $L^p(\mathbb{R}^n)$. If $X \subset L^p(X)$, then $\overline{X}^p$ will denote the closure of $X$ in $L^p(\mathbb{R}^n)$. Also $p'$ will always represent the conjugate index of $p$, that is $\frac{1}{p} + \frac{1}{p'} =1$. For $f \in L^1(\mathbb{R}^n)$ the Fourier transform of $f$ is defined by
\[ \hat{f}(\xi) = \mathcal{F}(f)(\xi) = \int_{\mathbb{R}^n} e^{-2\pi i x \cdot \xi}f(x) dx, \xi \in \mathbb{R}^n. \]
The Fourier transform can be extended to a unitary operator on $L^2(\mathbb{R}^n)$ and by the Hausdorff-Young inequality, $ \mathcal{F}$ can be extended to a continuous operator from $L^p(\mathbb{R}^n)$ to $L^{p'}(\mathbb{R}^n)$, when $1 < p <2$. Let $E$ be a closed subset of $\mathbb{R}^n$. If $f \in L^1(\mathbb{R}^n)$, then $\hat{f}$ is continuous on $\mathbb{R}^n$. Consequently, the restriction of $\hat{f}$ to $E$, which we denote by $\hat{f}\mid_E$, is a well-defined function on $E$. For $1 < p < 2$ and $f \in L^p(\mathbb{R}^n), \hat{f} \in L^{p'}(\mathbb{R}^n)$. So if $E$ is a set of positive Lebesgue measure we can restrict $\hat{f}$ to $E$. The interesting question is can $\hat{f}$ be restricted to $E$ when $E$ has Lebesgue measure zero? This question is the heart of the restriction problem, which we will now describe.

For $E \subseteq \mathbb{R}^n$, let $C(E)$ be the set of continuous functions on $E$ and let $C_c(E)$ be the set of functions in $C(E)$ with compact support. Let $L^p(E)$ be the usual Banach space formed with respect to the induced measure $d\sigma$ on $E$. The norm on $L^p(E)$ will be denoted by $\Vert \cdot \Vert_{L^p(E)}$. Recall that the norm on $L^p(\mathbb{R}^n)$ is indicated by $\Vert \cdot \Vert_{L^p}$. Let $2 \leq n \in \mathbb{Z}$ and let $\mathcal{S}(\mathbb{R}^n)$ denote the space of Schwartz functions on $\mathbb{R}^n$. The operator $\mathcal{R}_E\colon \mathcal{S}(\mathbb{R}^n) \rightarrow C(E)$ given by 
\[ \mathcal{R}_E (f) = \hat{f}\mid_E \]
is known as the {\em restriction operator} associated with $E$. If $\mathcal{R}_E$ can be extended to a continuous operator from $L^p(\mathbb{R}^n) \rightarrow L^q(E)$, then we shall say that $\mathcal{R}_E$ has property $\mathcal{R}(E, p, q)$. Observe that if $1 \leq q_1 \leq q_2$ and $\mathcal{R}_E$ has property $\mathcal{R}(E, p, q_2)$, then it also has property $\mathcal{R}(E, p, q_1)$. We shall say that $E$ is a set of {\em $p$-restriction} if $\mathcal{R}_E$ has property $\mathcal{R}(E, p, 1)$. Note that any closed set in $\mathbb{R}^n$ is a set of $1$-restriction. Furthermore, if $E$ is not a set of $p$-restriction, then $\mathcal{R}_E$ does not have property $\mathcal{R}(E, p, q)$ for any $q \geq 1$. The best known result concerning the restriction of $\hat{f}$ to $E$ is the Stein-Tomas theorem: $\mathcal{R}_E$ has property $\mathcal{R}(E, p, 2)$ if and only if $1 \leq p \leq \frac{2n+2}{n+3}$, where $E$ is a smooth compact hypersurface in $\mathbb{R}^n$ with nonzero Gaussian curvature. In general though it is an extremely difficult problem to determine if $\mathcal{R}_E$ has property $\mathcal{R}(E, p, q)$. A more comprehensive treatment of the restriction problem, along with its history, can be found in \cite{FoschiOliviera17, Tao04}, and \cite[Chapter 5.4]{Grafakos09} and the references therein. 

In this paper we will only be concerned with the case where $\mathcal{R}_E$ has property $\mathcal{R}(E, p, 1)$, that is $E$ is a set of $p$-restriction. Set
\[ J(E) = \{ f \in \mathcal{S}(\mathbb{R}^n) \mid \hat{f}\mid_E = 0 \}, \]
and if $E$ is a set of $p$-restriction define
\[ I^p(E) = \{f \in L^p(\mathbb{R}^n) \mid \hat{f}\mid_E = 0 \}. \]
This paper was inspired by the paper \cite{Stolyarov16} where these spaces were investigated for the case when $E$ is the unit sphere $S^{n-1}$ in $\mathbb{R}^n$. Our first main result is: 
\begin{Thm} \label{notrestrictionlowerdim} Let $E$ be a smooth compact submanifold of codimension $k$ in $\mathbb{R}^n$. If there exists $f \in C_c(\mathbb{R}^n)$ such that $\hat{f}$ vanishes on $E$, then $E$ is not a set of $p$-restriction for $p > \frac{2n}{n+k}$.
\end{Thm}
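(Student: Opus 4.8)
The plan is to argue by contraposition, passing through the restriction--extension duality, and then to reduce everything to an $L^2$ lower bound for $\widehat{d\sigma}$ that is dictated purely by the dimension of $E$. Suppose $E$ were a set of $p$-restriction, so that $\Vert \hat g\mid_E\Vert_{L^1(E)}\le C\Vert g\Vert_{L^p}$ for every $g\in\mathcal{S}(\mathbb{R}^n)$. Since $E$ is compact, $d\sigma$ is a finite measure and $\widehat{d\sigma}$ is bounded and continuous. For fixed $\phi\in C(E)$ the finite measure $\phi\,d\sigma$ satisfies, by Parseval,
\[ \int_{\mathbb{R}^n} g(x)\,\widehat{\phi\,d\sigma}(x)\,dx=\int_E \hat g(\xi)\,\phi(\xi)\,d\sigma(\xi), \]
so that $\bigl\lvert\int g\,\widehat{\phi\,d\sigma}\bigr\rvert\le C\Vert\phi\Vert_{L^\infty(E)}\Vert g\Vert_{L^p}$ for all $g\in\mathcal{S}$. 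Because $\mathcal{S}$ is dense in $L^p$ and $(L^p)^*=L^{p'}$ for $1<p<\infty$, this says exactly that $\widehat{\phi\,d\sigma}\in L^{p'}(\mathbb{R}^n)$. Taking $\phi\equiv 1$ we obtain the necessary condition $\widehat{d\sigma}\in L^{p'}(\mathbb{R}^n)$, and the whole theorem follows once we show this fails for $p>\frac{2n}{n+k}$, equivalently for $p'<\frac{2n}{n-k}$.

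Write $d=n-k=\dim E$. The geometric core is the autocorrelation bound
\[ \int_{\lvert\xi\rvert\le R}\lvert\widehat{d\sigma}(\xi)\rvert^2\,d\xi\;\ge\; c\,R^{\,k}\qquad(R\ge R_0), \]
with $c>0$. To see it, pick $\psi\ge 0$ smooth with $\psi\ge 1$ on the unit ball and set $K=\check\psi$; then $\int\lvert\widehat{d\sigma}\rvert^2\psi(\xi/R)\,d\xi=\int_E\int_E R^n K\bigl(R(x-y)\bigr)\,d\sigma(x)\,d\sigma(y)$, and since $E$ is a smooth compact submanifold its induced measure is uniformly $d$-Ahlfors regular, so $\int_E R^n K(R(x-y))\,d\sigma(y)\ge c\,R^{\,n-d}=c\,R^{\,k}$ uniformly in $x\in E$; integrating in $x$ gives the claim. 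Granting this, assume $\widehat{d\sigma}\in L^{p'}$. If $p'>2$, Hölder on the ball yields $R^{\,k}\le c^{-1}\int_{\lvert\xi\rvert\le R}\lvert\widehat{d\sigma}\rvert^2\le c^{-1}\Vert\widehat{d\sigma}\Vert_{L^{p'}}^2\,(C R^n)^{1-2/p'}$, forcing $k\le n(1-2/p')$, i.e.\ $p'\ge\frac{2n}{n-k}$. If instead $p'\le 2$, then $\widehat{d\sigma}\in L^{p'}\cap L^\infty\subset L^2$, contradicting the divergence of $\int_{\lvert\xi\rvert\le R}\lvert\widehat{d\sigma}\rvert^2$. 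In either case membership in $L^{p'}$ forces $p'\ge\frac{2n}{n-k}$, that is $p\le\frac{2n}{n+k}$; contrapositively, for $p>\frac{2n}{n+k}$ we have $\widehat{d\sigma}\notin L^{p'}$, and hence $E$ is not a set of $p$-restriction.

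The only nonformal ingredient is the lower bound $\int_{\lvert\xi\rvert\le R}\lvert\widehat{d\sigma}\rvert^2\ge cR^{\,k}$, so this is where the main work lies and where I expect the only real obstacle: one must check that the Riemannian surface measure of a smooth compact submanifold is \emph{uniformly} $d$-Ahlfors regular (this is exactly where smoothness and compactness enter) and control the oscillatory kernel $R^nK(R(x-y))$ near the diagonal, the $d$ tangential directions being integrated out and the $k$ normal directions producing the gain $R^{\,k}$. It is worth noting that the hypothesis supplying some $f\in C_c(\mathbb{R}^n)$ with $\hat f\mid_E=0$ appears not to be needed for this route, since the obstruction is purely dimensional; what it does guarantee is that $I^p(E)\neq\{0\}$, which is what renders the subsequent study of $p$-spectral synthesis nondegenerate. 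Alternatively, such an $f$ can be fed into a dual concentration argument—modulating and anisotropically rescaling $f$ so that $\widehat{g_\delta}\approx 1$ on $E$ while $\Vert g_\delta\Vert_{L^p}\to 0$—which reproduces the same threshold $\frac{2n}{n+k}$ by the same bookkeeping in the $k$ normal variables.
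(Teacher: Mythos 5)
Your argument takes a genuinely different route from the paper's. The paper deduces Theorem \ref{notrestrictionlowerdim} by combining Corollary \ref{corpartialWiener} with \cite[Corollary 1]{Agran_Nara04}: $p$-restriction would force $I^p(E)\neq L^p(\mathbb{R}^n)$, hence would prevent the translates of the hypothesized $f\in C_c(\mathbb{R}^n)$ from spanning $L^p(\mathbb{R}^n)$, while the cited corollary guarantees that those translates do span once $p>\frac{2n}{n+k}$; the $C_c$ hypothesis and the codimension enter only through that citation. You instead stop at the dual necessary condition $\widehat{d\sigma}\in L^{p'}(\mathbb{R}^n)$ (this is the paper's Lemma \ref{measureindual}, obtained by the same duality computation) and then exclude it by a self-contained energy estimate, $\int_{|\xi|\le R}|\widehat{d\sigma}|^2\,d\xi\ge cR^{k}$, which by H\"older is incompatible with $\widehat{d\sigma}\in L^{p'}$ for $p'<\frac{2n}{n-k}$. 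This is more elementary (no input from the Agranovsky--Narayanan uniqueness theorem), and, as you correctly observe, it renders the hypothesis on the existence of $f$ superfluous, so your statement is in fact stronger than the paper's; the paper needs that hypothesis only because its proof runs through translates of that particular $f$.

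The one step that is wrong as written is exactly the energy lower bound, on two counts. First, the comparison goes the wrong way: choosing $\psi\ge 1$ on the unit ball gives $\int_{|\xi|\le R}|\widehat{d\sigma}|^2\,d\xi\le\int|\widehat{d\sigma}(\xi)|^2\psi(\xi/R)\,d\xi$, so a lower bound on the weighted integral says nothing about the ball integral; you need $0\le\psi\le C\mathbf{1}_{B_1}$, i.e.\ $\psi$ supported in the unit ball. Second, $\psi\ge 0$ gives no sign information on $K=\check\psi$, and without $K\ge 0$ the asserted uniform bound $\int_E R^nK(R(x-y))\,d\sigma(y)\ge cR^{k}$ does not follow: writing $d=n-k$, the near-diagonal gain is of size $c_0\delta^{d}R^{k}$ (where $K\ge c_0$ on $B_\delta$, with $\delta$ fixed by $\psi$), while a dyadic estimate on the oscillating tails only bounds them by $C_N\delta^{d-N}R^{k}$, which is larger; there is no free parameter left to make the main term dominate. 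Both defects are repaired at once by the standard choice $\psi=\eta\ast\eta$ with $\eta$ smooth, even, nonnegative, supported in the ball of radius $\frac12$: then $\psi$ is supported in the unit ball with $0\le\psi\le\Vert\eta\Vert_{L^2}^2$, and $\check\psi=(\check\eta)^2\ge 0$ with $\check\psi\ge c_0>0$ on some fixed ball $B_{\delta_0}$, so the uniform lower Ahlfors bound $\sigma(B(x,r))\ge cr^{d}$ for the surface measure of a compact smooth submanifold yields $\int_{|\xi|\le R}|\widehat{d\sigma}|^2\,d\xi\ge c'R^{n}\cdot R^{-d}=c'R^{k}$ for large $R$. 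With that substitution your proof is correct and complete.
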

Thus $\mathcal{R}_E$ does not have property $\mathcal{R}(E, p, q)$ when $p > \frac{2n}{n+k}$ and $q \geq 1$. If $E$ is a hypersurface, then the lower bound for $p$ becomes $\frac{2n}{n+1}$. We are able to improve this lower bound for hypersurfaces with the constant relative nullity condition, which we now define. Let $U$ be an open set in $\mathbb{R}^{n-1}$ and let $F = \{ (x, \phi(x)) \mid x \in U \}$ be a smooth hypersurface in $\mathbb{R}^n$. If the Hessian matrix
\[ \left( \frac{\partial^2 \phi}{\partial x_i \partial x_j}\right) \]
of $\phi$ has constant rank $n-1 - \nu$ on $U$, where $0 \leq \nu \leq n-1$, then we say that $\phi$ has {\em constant relative nullity $\nu$}. A smooth hypersurface $E$ of $\mathbb{R}^n$ is said to have {\em constant relative nullity $\nu$} if every localization $F$ of $E$ has constant relative nullity $\nu$. If $\nu = n-1$, then $E$ is a hyperplane. It is known that hyperplanes are sets of $p$-restriction only if $p=1$. Thus we will only consider hypersurfaces with $0 \leq \nu \leq n-2$. Note that $\nu = 0$ for $S^{n-1}$.
\begin{Thm} \label{nullitynotres}
Let $2 \leq n \in \mathbb{Z}$ and let $E$ be a smooth compact hypersurface in $\mathbb{R}^n$ with constant relative nullity $\nu$, for $0 \leq \nu \leq n-2$. If $\frac{2(n -\nu)}{n - \nu +1} < p \in \mathbb{R}$, then $E$ is not a set of $p$-restriction.
\end{Thm}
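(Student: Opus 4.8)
The plan is to reduce the $p$-restriction property for $E$ in $\mathbb{R}^n$ to the same property for a genuinely curved hypersurface sitting in the lower-dimensional space $\mathbb{R}^{n-\nu}$, where it can be contradicted by the decay of the Fourier transform of surface measure. The organizing principle is the standard duality: if $E$ is a set of $p$-restriction, so that $\mathcal{R}_E$ is bounded from $L^p(\mathbb{R}^n)$ to $L^1(E)$, then testing the inequality $\|\hat f\mid_E\|_{L^1(E)}\le C\|f\|_{L^p}$ against suitable functions forces $\widehat{d\sigma}$ to lie in $L^{p'}$ of the appropriate Euclidean space. Since for a hypersurface of nonvanishing Gaussian curvature this transform decays only like $|\xi|^{-(\dim)/2}$, it fails to be $p'$-integrable once $p$ is too large, and the breakpoint turns out to be exactly $\tfrac{2(n-\nu)}{n-\nu+1}$.

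First I would record the geometric input. By definition of constant relative nullity, about any point $E$ has a localization $F=\{(x,\phi(x)):x\in U\}$ for which $\mathrm{Hess}(\phi)$ has constant rank $n-1-\nu$ on $U$. Because the rank is constant, the kernel (relative nullity) distribution of $\mathrm{Hess}(\phi)$ is smooth, and the classical structure theory of relative nullity for Euclidean submanifolds (the nullity distribution is integrable with totally geodesic leaves lying in affine subspaces, and the submanifold splits locally as a metric cylinder) yields, on a small patch and after a rigid motion splitting $\mathbb{R}^n=\mathbb{R}^{n-\nu}\times\mathbb{R}^{\nu}$, an isometric product $E=M\times W$. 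Here $M\subset\mathbb{R}^{n-\nu}$ is a compact piece of a smooth hypersurface whose Hessian is nondegenerate (hence of nonvanishing Gaussian curvature) and $W\subset\mathbb{R}^{\nu}$ is a box; correspondingly the induced measure factors as $d\sigma=d\sigma_M(\xi')\,d\xi''$ on the patch.

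Next comes the dimension reduction. Assume for contradiction that $E$ is a set of $p$-restriction, and apply the restriction inequality to tensor products $f(x)=g(x')h(x'')$ with $g\in\mathcal{S}(\mathbb{R}^{n-\nu})$ and a fixed $h\in\mathcal{S}(\mathbb{R}^{\nu})$ chosen so that $\int_W|\hat h(\xi'')|\,d\xi''>0$. Bounding $\|\hat f\mid_E\|_{L^1(E)}$ below by its restriction to the cylindrical patch and using the factorization of $d\sigma$ gives
\[ \Big(\int_{M}|\hat g(\xi')|\,d\sigma_M(\xi')\Big)\Big(\int_W|\hat h(\xi'')|\,d\xi''\Big)\le C\,\|g\|_{L^p(\mathbb{R}^{n-\nu})}\,\|h\|_{L^p(\mathbb{R}^{\nu})}. \]
Dividing by the positive, finite $h$-factors yields $\int_M|\hat g|\,d\sigma_M\le C'\|g\|_{L^p(\mathbb{R}^{n-\nu})}$ for all $g$, so that $M$ is a set of $p$-restriction in $\mathbb{R}^{n-\nu}$. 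Since $\big|\int_M\hat g\,d\sigma_M\big|\le\int_M|\hat g|\,d\sigma_M$ and $\int_M\hat g\,d\sigma_M=\int_{\mathbb{R}^{n-\nu}}g\,\widehat{d\sigma_M}$, the functional $g\mapsto\int g\,\widehat{d\sigma_M}$ is bounded on the dense subspace $\mathcal{S}(\mathbb{R}^{n-\nu})\subset L^p(\mathbb{R}^{n-\nu})$, whence $\widehat{d\sigma_M}\in L^{p'}(\mathbb{R}^{n-\nu})$.

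Finally I would contradict this membership. As $M$ has nonvanishing Gaussian curvature, the method of stationary phase provides an open cone of directions $\omega$ and a constant $c>0$ with $|\widehat{d\sigma_M}(R\omega)|\ge c\,R^{-(n-1-\nu)/2}$ for all large $R$, the critical point of the phase being nondegenerate at the point of $M$ whose normal is $\omega$. Integrating this lower bound in polar coordinates on $\mathbb{R}^{n-\nu}$ shows $\widehat{d\sigma_M}\notin L^{p'}(\mathbb{R}^{n-\nu})$ precisely when $\tfrac{(n-1-\nu)p'}{2}\le n-\nu$, i.e.\ when $p\ge\tfrac{2(n-\nu)}{n-\nu+1}$, contradicting the previous paragraph; hence $E$ is not a set of $p$-restriction for $p>\tfrac{2(n-\nu)}{n-\nu+1}$. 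Note that this exponent is exactly the codimension-one threshold $\tfrac{2m}{m+1}$ in dimension $m=n-\nu$, matching Theorem~\ref{notrestrictionlowerdim}; the direct decay argument above has the advantage of not requiring a compactly supported function with transform vanishing on $M$. I expect the main obstacle to be the first step: extracting the local cylinder (affine ruling) structure from constant relative nullity in a form clean enough to justify the factorization $d\sigma=d\sigma_M\,d\xi''$, together with the bookkeeping needed to pass from a genuinely local product to a global test-function argument, which a smooth cutoff supported in a single patch should handle. The stationary-phase lower bound, though standard, also requires care to ensure nonvanishing of the leading amplitude on an open set of directions.
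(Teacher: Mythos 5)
Your reduction hinges on the claim that constant relative nullity $\nu$ forces $E$, locally and after a rigid motion, to split as an extrinsic product $M\times W\subset\mathbb{R}^{n-\nu}\times\mathbb{R}^{\nu}$ with $d\sigma=d\sigma_M\,d\xi''$. That claim is false, and it is exactly where the proof breaks. Constant rank of the Hessian does give that the nullity distribution is integrable with leaves that are open pieces of $\nu$-dimensional affine subspaces, but it does not make those affine leaves \emph{parallel} to one another, which is what a product (cylinder) splitting in fixed orthogonal coordinates requires. The standard counterexamples already occur for $n=3$, $\nu=1$: a patch of the cone $\{(x,|x|): 1<|x|<2,\ x\in\mathbb{R}^2\}$ (the Hessian of $|x|$ has constant rank $1$), or the tangent developable of a space curve; their rulings are not parallel (for the cone they all point toward the vertex), so no rigid motion puts such a patch in the form $M\times W$. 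The cylinder conclusion you invoke (Hartman--Nirenberg) requires completeness of the hypersurface or of the nullity leaves, which a compact patch never has; locally one only gets a ruled, developable structure, and intrinsic flatness of the leaves must not be conflated with an extrinsic product embedding. Note also that for $\nu\geq 1$ the theorem is really \emph{about} such patches-with-boundary: a closed compact hypersurface has a point where all principal curvatures are nonzero, so constant nullity $\nu\geq 1$ forces $E$ to be a bounded piece of a surface, and cones/developables are then central examples, not pathologies. Since your tensor test functions $f=g\otimes h$ and the factorization of both $\Vert \hat f\mid_E\Vert_{L^1(E)}$ and $\Vert f\Vert_{L^p}$ rely entirely on the product structure, the dimension reduction collapses for these examples, and your argument proves the theorem only for the proper subclass of locally cylindrical $E$.

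The rest of your outline is sound and worth salvaging: the duality step (restriction implies $\widecheck{d\sigma}\in L^{p'}$) is precisely Lemma \ref{measureindual} of the paper, and the stationary-phase lower bound plus the polar-coordinate integrability computation correctly handles the case $\nu=0$ (nonvanishing Gaussian curvature), where no decomposition is needed. The paper sidesteps the geometric difficulty entirely: it combines the translate-density criterion of Corollary \ref{corpartialWiener} (via Theorem \ref{pthinpres}) with Guo's theorem \cite{Guo93} that a hypersurface with constant relative nullity $\nu$ is $p'$-thin in the stated range --- i.e.\ \emph{no} nonzero distribution supported on $E$ has inverse Fourier transform in $L^{p'}(\mathbb{R}^n)$ --- a statement much stronger than $\widecheck{d\sigma}\notin L^{p'}$, and one whose proof contains precisely the degenerate oscillatory-integral analysis along non-parallel rulings that your cylinder assumption bypasses. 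To repair your proof you would either have to invoke that result (at which point the paper's shorter route is available) or carry out the stationary-phase lower bound for $\widecheck{d\sigma}$ directly on a general ruled patch in $\mathbb{R}^n$, which is substantially harder than the product computation you describe.
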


Let $f \in L^p(\mathbb{R}^n)$ and let $y \in \mathbb{R}^n$. The translate of $f$ by $y$, which we write as $f_y$, is the function $f_y(x) = f(x-y)$, where $x \in \mathbb{R}^n$. For $f \in L^p(\mathbb{R}^n)$, let $T^p[f]$ be the closed subspace of $L^p(\mathbb{R}^n)$ spanned by $f$ and its translates. The zero set $Z(f)$ of $ f \in L^1(\mathbb{R}^n)$ is defined by
\[ Z(f) = \{ \xi \in \mathbb{R}^n \mid \hat{f}(\xi) = 0 \}. \]
In Section \ref{proofsnotrestrictthm} we will see that if $Z(f)$ is a set of $p$-restriction, then $T^p[f] \neq L^p(\mathbb{R}^n)$.

We will now briefly review the concept of spectral synthesis in $L^1(\mathbb{R}^n)$. Suppose $I$ is a closed ideal in $L^1(\mathbb{R}^n)$ and define the zero set of $I$ by
\[ Z(I) = \bigcap_{f \in I} Z(f). \]
Let $E$ be a closed set in $\mathbb{R}^n$, then $I^1(E)$ is a closed ideal in $L^1(\mathbb{R}^n)$ with zero set $E$. In fact, $I^1(E)$ is the largest closed ideal in $L^1(\mathbb{R}^n)$ whose zero set is $E$. Now let
\[ k(E) = \{ f \in \mathcal{S}(\mathbb{R}^n) \mid \hat{f} =0 \mbox{ on a neighborhood of } E\}. \]
Then
\[ k(E) \subseteq J(E) \subseteq L^p(\mathbb{R}^n) \]
and $\overline{k(E)}^1$ is the smallest closed ideal in $L^1(\mathbb{R}^n)$ with zero set $E$. The set $E$ is known as a set of spectral synthesis if $\overline{k(E)}^1 = I^1(E)$. A more detailed account of spectral synthesis can be found in \cite{Domar71}\cite[Chapter 7]{Rudin90}. Extending the concept of spectral synthesis to $L^p(\mathbb{R}^n)$ for $p >1$ falls short since the analog to $I^1(E), I^p(E)$, is not well-defined for closed sets of Lebesgue measure zero in $\mathbb{R}^n$. However, for sets of $p$-restriction $I^p(E)$ is well-defined, which allows us to extend the idea of spectral synthesis to $L^p(\mathbb{R}^n)$ for sets $E$ of $p$-restriction.  We shall say that a set $E$ of $p$-restriction is a set of {\em $p$-spectral synthesis} if 
\[ \overline{k(E)}^p = I^p(E). \]
We can now state:
\begin{Thm} \label{psyn}
Let $2 \leq n \in \mathbb{Z}$ and let $E$ be a smooth compact hypersurface in $\mathbb{R}^n$ with constant relative nullity $\nu, 0 \leq \nu \leq n-2$. If $E$ is a set of $p$-restriction for some $p$ that satisfies one of the following:
\begin{enumerate}
\item $\frac{2(n - \nu)}{n+3 - \nu} \leq p < 2$ and $0 \leq \nu  < n-3$
\item $1 < p < 2$ for $n-3 \leq \nu < n-1$,
\end{enumerate}
then $E$ is a set of $p$-spectral synthesis.
\end{Thm}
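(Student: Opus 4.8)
\emph{Proof proposal.} The plan is to establish the two inclusions $\overline{k(E)}^p \subseteq I^p(E)$ and $I^p(E) \subseteq \overline{k(E)}^p$ separately, the first being routine and the second carrying the content. For the first, if $f \in k(E)$ then $\hat{f}$ vanishes on a neighborhood of $E$, so in particular $\hat{f}\mid_E = 0$ and $f \in I^p(E)$. Since $E$ is assumed to be a set of $p$-restriction, the restriction operator is continuous and $I^p(E)$ is closed in $L^p(\mathbb{R}^n)$; taking closures gives $\overline{k(E)}^p \subseteq I^p(E)$.

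For the reverse inclusion I would argue by duality. As $\overline{k(E)}^p$ and $I^p(E)$ are closed subspaces of $L^p(\mathbb{R}^n)$ with the former contained in the latter, it suffices to show they have the same annihilator in $(L^p)^* = L^{p'}(\mathbb{R}^n)$; equivalently, that every $g \in L^{p'}(\mathbb{R}^n)$ annihilating $k(E)$ also annihilates $I^p(E)$. Via Parseval, $g$ annihilates $k(E)$ exactly when the tempered distribution $\hat{g}$ is supported in $E$. Because $E$ is compact, $\hat{g}$ is a compactly supported distribution, hence of finite order, and by the structure theorem for distributions supported on a smooth hypersurface one may write, in tubular coordinates, $\hat{g} = \sum_{j=0}^m \partial_N^j(a_j\, d\sigma)$ for some finite $m$, where $d\sigma$ is surface measure, $\partial_N$ is the transverse (normal) derivative, and the $a_j$ are distributions on $E$.

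The key step is to show that the hypotheses on $p$ and $\nu$ force $m = 0$. Here the constant relative nullity enters: $E$ has $d := n-1-\nu$ nonvanishing principal curvatures, so stationary phase yields the decay $|\mathcal{F}^{-1}(d\sigma)(x)| \lesssim |x|^{-d/2}$ along the directions transverse to the flat factor, with rapid decay in the $\nu$ flat directions. Each transverse derivative $\partial_N$ corresponds under $\mathcal{F}^{-1}$ to multiplication by a linear factor, so the inverse transform of the top-order term decays only like $|x|^{m-d/2}$ in the normal cone and cannot be offset by the faster-decaying lower-order terms. Integrating $|x|^{(m-d/2)p'}$ over the relevant conical region (where the $x'$-slices have volume $\sim |x|^d$), I expect $g \in L^{p'}$ with $m \geq 1$ to require $(m - \tfrac{d}{2})p' + d < -1$, which for $m=1$ reads $p' > \tfrac{2(d+1)}{d-2} = \tfrac{2(n-\nu)}{n-3-\nu}$. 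Rewriting in terms of $p$, this is precisely the failure of $p \geq \tfrac{2(n-\nu)}{n+3-\nu}$ in case (1); in case (2) one has $d \leq 2$, so the exponent $m-\tfrac{d}{2}$ is nonnegative for $m \geq 1$ and no finite $p'$ can accommodate a term of positive order. Thus under either hypothesis $m = 0$, while the standing $p$-restriction assumption guarantees that order-zero terms are genuinely consistent with $g \in L^{p'}$ (the extension operator dual to $\mathcal{R}_E$ being bounded into $L^{p'}$).

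Once $\hat{g} = a_0\, d\sigma$ is of order zero, $g = \mathcal{F}^{-1}(a_0\, d\sigma)$ lies in the range of the extension operator adjoint to $\mathcal{R}_E$, and for every $f \in I^p(E)$ the $p$-restriction property makes $\hat{f}\mid_E$ a bona fide element of $L^1(E)$ that vanishes; hence $\langle g, f\rangle = \langle a_0, \mathcal{R}_E f\rangle = \int_E a_0\, \hat{f}\, d\sigma = 0$. Therefore $g$ annihilates $I^p(E)$, the two annihilators coincide, and $I^p(E) = \overline{k(E)}^p$. I expect the main obstacle to be the rigorous ``leading term dominates'' argument of the third paragraph: one must control how the transverse derivatives interact with the tangential distributions $a_j$ and rule out any cancellation among terms of differing order that might produce $L^{p'}$ decay, which is exactly where the stationary-phase estimate calibrated to the nullity $\nu$ must be deployed with care.
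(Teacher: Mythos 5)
Your overall frame---prove $\overline{k(E)}^p \subseteq I^p(E)$ directly, then argue by duality that every $g \in L^{p'}(\mathbb{R}^n)$ annihilating $k(E)$ (equivalently, with $\operatorname{supp}\widehat{g} \subseteq E$) also annihilates $I^p(E)$---is exactly the frame of the paper's Theorem \ref{papproximppspec}. But the paper fills in the one hard step by \emph{citing} deep results of Guo (\cite[Theorem 1]{Guo89}, \cite[Theorem 2]{Guo95}): under precisely the hypotheses (1) and (2) on $p$ and $\nu$, the hypersurface $E$ has the $p'$-approximate property, meaning every distribution $T$ with $\operatorname{supp}T \subseteq E$ and $\widecheck{T} \in L^{p'}(\mathbb{R}^n)$ is an $L^{p'}$-\emph{limit} of inverse Fourier transforms of smooth densities $F_j\,d\sigma$. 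Your third paragraph attempts to re-derive this from scratch via the structure theorem plus stationary phase, and that is where the genuine gap lies. The decomposition $\widehat{g} = \sum_{j=0}^m \partial_N^j(a_j\,d\sigma)$ has \emph{distributions} $a_j$ on $E$ as coefficients, so you cannot apply the stationary-phase decay bound $|\widecheck{d\sigma}(x)| \lesssim |x|^{-d/2}$ (valid for smooth densities) to the individual terms, and you have no mechanism to exclude cancellation between terms of different normal order---you flag this yourself as ``the main obstacle,'' but it is not a technical afterthought: it is the entire content of Guo's theorems, whose proofs occupy separate papers. Your numerology ($p' > 2(n-\nu)/(n-3-\nu)$ being needed to accommodate an order-one term) does correctly reproduce the threshold in case (1)---indeed the paper uses exactly this first-order distribution on $S^{n-1}$ to prove sharpness of the lower bound---but a matching exponent count is not a proof that $m=0$.

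There is a second gap even if one grants $m = 0$: you then have $\widehat{g} = a_0$ where $a_0$ is merely a distribution \emph{on} $E$ (possibly of positive tangential order, certainly not known to be a measure with bounded density), yet your final computation $\langle g, f\rangle = \langle a_0, \mathcal{R}_E f\rangle = 0$ pairs $a_0$ against $\mathcal{R}_E f \in L^1(E)$, which is undefined unless $a_0$ acts continuously on $L^1(E)$, i.e.\ is essentially an $L^\infty(E)$ density. Note also that $\mathcal{R}_E f$ for $f \in I^p(E)$ is only an $L^1(E)$-limit of the smooth functions $\widehat{f_j}\mid_E$, not itself smooth, so even pairing with a genuine distribution $a_0$ is not legitimate. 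The paper's route avoids both problems simultaneously: it only needs each \emph{smooth}-density term $\widecheck{F\,d\sigma}$ to annihilate $I^p(E)$ (proved by the approximation argument of Proposition \ref{partialWiener}, using the $p$-restriction property and boundedness of $F$ on the compact set $E$), and then uses that $\text{Ann}(I^p(E))$ is norm-closed in $L^{p'}(\mathbb{R}^n)$, so the $L^{p'}$-limit $g$ supplied by the approximate property annihilates $I^p(E)$ as well. In short: replace your third paragraph by an appeal to \cite[Theorem 1]{Guo89} and \cite[Theorem 2]{Guo95}, and replace the exact representation $\widehat{g} = a_0\,d\sigma$ by the approximation statement; otherwise you are implicitly claiming a stronger structure theorem than is known or needed.
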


It is known that $S^1$ is a set of spectral synthesis in $\mathbb{R}^2$ \cite{Herz58}, but  $ S^{n-1}$ is not a set  of spectral synthesis in $\mathbb{R}^n$ for $n \geq 3$ \cite[Chapter 7.3]{Rudin90}. We will use Theorem \ref{psyn} to show that there are $p$-values  where $S^{n-1}$ is a set of $p$-spectral synthesis for $n \geq 3$. 

This paper is organized as follows: In Section \ref{Preliminaries} we give some background and results that will be needed for this paper. In Section \ref{proofsnotrestrictthm} we will prove Theorems \ref{notrestrictionlowerdim} and \ref{nullitynotres} by linking them to the problem of determining when $T^p[f]$ is dense in $L^p(\mathbb{R}^n)$ for $f \in \mathcal{S}(\mathbb{R}^n)$ with $\hat{f} = 0$ on $E$. In Section \ref{pspectralsyn} we prove Theorem \ref{psyn}, and use the theorem to show that there are $p$-values for which the unit sphere $S^{n-1}$ is a set of $p$-spectral synthesis in $\mathbb{R}^n$ for $n \geq 3$. 

\section{Preliminaries}\label{Preliminaries}
In this section we will give some results that will be used in the sequel. The convolution of two measurable functions $f$ and $g$ on $\mathbb{R}^n$ is defined by
\[ f \ast g(x) = \int_{\mathbb{R}^n} f(x-y) g(y) \, dy.\]
Let $1<p<2$. Each $\phi \in L^{p'}(\mathbb{R}^n)$ defines a bounded linear functional $T_{\phi}$ on $L^p(\mathbb{R}^n)$ via
\[ T_{\phi} (f) = \int_{\mathbb{R}^n} f(-x)\phi(x)\, dx. \]
Sometimes we will write $\langle f, \phi \rangle$ in place of $T_{\phi}(f)$. For closed subspaces $X$ in $L^p(\mathbb{R}^n)$,
\[ \text{Ann}(X) = \{ \phi \in L^{p'}(\mathbb{R}^n) \mid T_{\phi} (f) =0 \text{ for all } f\in X\}, \]
will denote the annihilator of $X$ in $L^{p'}(\mathbb{R}^n)$. The following characterization of $\text{Ann}(X)$ when $X$ is a translation-invariant subspace of $L^p(\mathbb{R}^n)$ will be needed later.
\begin{Prop} \label{annihilator} Let $X$ be a translation-invariant subspace of $L^p(\mathbb{R}^n)$. Then $\phi \in \text{Ann}(X)$ if and only if $f \ast \phi = 0$ for all $f \in X$.
\end{Prop}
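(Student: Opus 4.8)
The plan is to reduce everything to a single pointwise identity that rewrites the convolution $f \ast \phi$ evaluated at a point as the functional $T_\phi$ applied to a translate of $f$. First I would record a well-definedness observation: for $f \in L^p(\mathbb{R}^n)$ and $\phi \in L^{p'}(\mathbb{R}^n)$ with $\frac{1}{p} + \frac{1}{p'} = 1$, Hölder's inequality shows that for every $y \in \mathbb{R}^n$ the function $x \mapsto f(y-x)\phi(x)$ lies in $L^1(\mathbb{R}^n)$, so $f \ast \phi(y)$ is defined at every point $y$ (in fact $f \ast \phi$ is a bounded continuous function). This is what makes it legitimate to speak of the pointwise values of $f \ast \phi$, and in particular to evaluate it at $y = 0$.

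The key computation is the identity
\[ f \ast \phi(y) = \int_{\mathbb{R}^n} f(y - x)\phi(x)\,dx = \int_{\mathbb{R}^n} f_{-y}(-x)\phi(x)\,dx = T_\phi(f_{-y}), \]
valid for every $y \in \mathbb{R}^n$, which follows directly from the definition of convolution, from the definition of the translate $f_{-y}(x) = f(x + y)$, and from the definition of $T_\phi$. Once this identity is in hand, both implications of the proposition are immediate.

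For the forward direction, suppose $\phi \in \text{Ann}(X)$ and let $f \in X$. Since $X$ is translation-invariant, $f_{-y} \in X$ for every $y$, so $T_\phi(f_{-y}) = 0$; by the identity this says $f \ast \phi(y) = 0$ for all $y$, that is $f \ast \phi = 0$. For the converse, suppose $f \ast \phi = 0$ for every $f \in X$. Specializing the identity to $y = 0$ and using $f_0 = f$ gives $0 = f \ast \phi(0) = T_\phi(f_0) = T_\phi(f)$ for all $f \in X$, which is precisely the statement that $\phi \in \text{Ann}(X)$.

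I do not expect a serious obstacle in this argument. The only points requiring care are the sign and direction bookkeeping in the translate — verifying that it is $f_{-y}$ rather than $f_y$ that appears in the identity — and the appeal to Hölder's inequality that legitimizes the pointwise convolution. It is worth noting that translation-invariance of $X$ enters in exactly one place, namely the forward direction, whereas the converse uses only the single value $y = 0$.
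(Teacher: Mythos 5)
Your proof is correct and follows essentially the same route as the paper: both rest on the identity $f \ast \phi(y) = T_\phi(f_{-y})$, with translation-invariance supplying one direction and evaluation at $y=0$ the other. Your additional remarks (H\"older's inequality for pointwise well-definedness of the convolution, and the observation that invariance is used only in the forward implication) are accurate refinements of the paper's terser argument, not a different method.
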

\begin{proof}
Observe that for $f \in L^p(\mathbb{R}^n)$ and $\phi \in L^{p'}(\mathbb{R}^n)$
\[ f \ast \phi (x) = \int_{\mathbb{R}^n} f(x-y) \phi(y)\, dy = \int_{\mathbb{R}^n} f_{-x}(-y) \phi(y) \, dy = T_{\phi} (f_{-x}). \]
It follows from the translation invariance of $X$ that $f \ast \phi = 0$ for all  $f\in X$ if and only if $\phi \in \text{Ann}(X)$.
\end{proof}
The space $L^p(\mathbb{R}^n)$ is a $L^1(\mathbb{R}^n)$-module since $f \ast g \in L^p(\mathbb{R}^n)$ whenever $f \in L^1(\mathbb{R}^n)$ and $g \in L^p(\mathbb{R}^n)$. The following proposition will not be used in the paper, but we record it here for its independent interest.
\begin{Prop} \label{submodule}
If $E$ is a set of $p$-restriction, then $I^p(E)$ is a $L^1(\mathbb{R}^n)$-submodule of $L^p(\mathbb{R}^n)$.
\end{Prop}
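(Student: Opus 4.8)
The plan is to show that the convolution $g \ast f$ of any $g \in L^1(\mathbb{R}^n)$ with any $f \in I^p(E)$ again lies in $I^p(E)$; since $I^p(E)$ is plainly a vector subspace and is closed, being the kernel of the continuous extension $\mathcal{R}_E \colon L^p(\mathbb{R}^n) \to L^1(E)$, this suffices to establish the submodule property. The only real content is that the Fourier transform of a convolution factors, after restriction to $E$, as $\widehat{g \ast f}\mid_E = \hat{g}\mid_E \cdot \hat{f}\mid_E$. The main obstacle is that for $f \in L^p(\mathbb{R}^n)$ with $1 < p < 2$ the transform $\hat{f}$ is merely an a.e.-defined $L^{p'}$ function, so $\mathcal{R}_E(f)$ is meaningful only through the continuous extension and not by literal restriction. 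Hence the factorization cannot be invoked directly and must be recovered by approximation together with the continuity of $\mathcal{R}_E$.

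First I would reduce to the case $g \in \mathcal{S}(\mathbb{R}^n)$. Given $g \in L^1(\mathbb{R}^n)$, choose $g_k \in \mathcal{S}(\mathbb{R}^n)$ with $g_k \to g$ in $L^1(\mathbb{R}^n)$. By Young's inequality $\Vert g_k \ast f - g \ast f \Vert_{L^p} \leq \Vert g_k - g \Vert_{L^1} \Vert f \Vert_{L^p} \to 0$, so $g_k \ast f \to g \ast f$ in $L^p(\mathbb{R}^n)$. If each $g_k \ast f$ lies in the closed set $I^p(E)$, then so does $g \ast f$. Thus it is enough to treat $g \in \mathcal{S}(\mathbb{R}^n)$.

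Next, fix $g \in \mathcal{S}(\mathbb{R}^n)$ and $f \in I^p(E)$, and choose $f_k \in \mathcal{S}(\mathbb{R}^n)$ with $f_k \to f$ in $L^p(\mathbb{R}^n)$. Since $g \ast f_k \in \mathcal{S}(\mathbb{R}^n)$, the classical convolution theorem gives $\widehat{g \ast f_k} = \hat{g}\,\hat{f_k}$ pointwise, and restricting to $E$ yields $\mathcal{R}_E(g \ast f_k) = \hat{g}\mid_E \cdot \mathcal{R}_E(f_k)$ as an honest product of functions on $E$. Now I pass to the limit on both sides. On the left, $g \ast f_k \to g \ast f$ in $L^p(\mathbb{R}^n)$, so by continuity of $\mathcal{R}_E$ we obtain $\mathcal{R}_E(g \ast f_k) \to \mathcal{R}_E(g \ast f)$ in $L^1(E)$. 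On the right, $\mathcal{R}_E(f_k) \to \mathcal{R}_E(f) = 0$ in $L^1(E)$, and since $\hat{g}$ is bounded (indeed $\Vert \hat{g} \Vert_{L^\infty} \leq \Vert g \Vert_{L^1}$), multiplication by $\hat{g}\mid_E$ is a bounded operator on $L^1(E)$, whence $\hat{g}\mid_E \cdot \mathcal{R}_E(f_k) \to 0$. Comparing the two limits gives $\mathcal{R}_E(g \ast f) = 0$, that is, $g \ast f \in I^p(E)$.

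Combining the two reductions shows $g \ast f \in I^p(E)$ for every $g \in L^1(\mathbb{R}^n)$ and $f \in I^p(E)$, which together with the linear structure of $I^p(E)$ proves that $I^p(E)$ is an $L^1(\mathbb{R}^n)$-submodule of $L^p(\mathbb{R}^n)$. The single delicate point throughout is the interchange of the extended restriction operator with the two limiting processes, and this is legitimate precisely because $\mathcal{R}_E$ is continuous from $L^p(\mathbb{R}^n)$ into $L^1(E)$ and $\hat{g}$ acts as a bounded multiplier on $E$.
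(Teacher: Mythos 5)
Your proof is correct, but it takes a genuinely different route from the paper's. The paper's proof is a two-line soft argument: it invokes (a modification of) Rudin's Theorem 7.1.2 from \emph{Fourier Analysis on Groups}, which says that a closed subspace of $L^p(\mathbb{R}^n)$ is translation invariant if and only if it is an $L^1(\mathbb{R}^n)$-submodule, and then simply observes that $I^p(E)$ is a closed translation-invariant subspace. That argument is general --- it applies to any closed translation-invariant subspace, not just $I^p(E)$ --- but it leaves the adaptation of Rudin's theorem (whose proof runs through vector-valued integration, writing $g \ast f = \int g(y) f_y\, dy$ and approximating by Riemann sums lying in the span of translates) unverified. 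You instead work directly with the specific structure of $I^p(E)$ as the kernel of the continuous extension $\mathcal{R}_E \colon L^p(\mathbb{R}^n) \to L^1(E)$: you reduce to $g \in \mathcal{S}(\mathbb{R}^n)$ by Young's inequality and closedness, then use the pointwise factorization $\widehat{g \ast f_k} = \hat{g}\,\hat{f_k}$ on Schwartz approximants together with continuity of $\mathcal{R}_E$ and boundedness of multiplication by $\hat{g}\mid_E$ on $L^1(E)$. This is more elementary and entirely self-contained (nothing is cited), and it sidesteps translation invariance altogether; the trade-off is that it is longer and specific to kernels of restriction operators, whereas the paper's route exposes the general principle that translation-invariant closed subspaces and $L^1$-submodules coincide. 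Both proofs are sound; your careful handling of the two limiting processes against the extended operator $\mathcal{R}_E$ is exactly the point the approximation argument needs.
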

\begin{proof}
A modification of the proof of \cite[Theorem 7.1.2]{Rudin90} will show that a closed translation-invariant subspace of $L^p(\mathbb{R}^n)$ is translation invariant if and only if it is a $L^1(\mathbb{R}^n)$-submodule of $L^p(\mathbb{R}^n)$. The proposition now follows since $I^p(E)$ is a closed translation-invariant subspace of $L^p(\mathbb{R}^n)$.
\end{proof}

It is well known that the Fourier transform is an isomorphism on the Schwartz space $\mathcal{S}(\mathbb{R}^n)$, with inverse Fourier transform given by
\[ \widecheck{f}(x)  = \int_{\mathbb{R}^n} f (\xi) e^{2\pi i(\xi \cdot x)}\, d\xi \]
for $f \in \mathcal{S}(\mathbb{R}^n)$. A continuous linear functional on $\mathcal{S}(\mathbb{R}^n)$ is known as a {\em temperate distribution}. A nice property of temperate distributions is that the Fourier transform can be extended to them. In fact, the Fourier transform defines an isomorphism on the temperate distributions. Indeed, if $T$ is a tempered distribution, then $\widehat{T}$ is the tempered distribution given by
\[ \widehat{T} (f) = T(\widehat{f}) \]
for $f \in \mathcal{S}(\mathbb{R}^n)$. The inverse Fourier transform $\widecheck{T}$ of a temperate distribution $T$ is defined by
\[ \widecheck{T}(f) = T(\widecheck{f}), \]
where $f \in \mathcal{S}(\mathbb{R}^n)$. Since elements of $L^p(\mathbb{R}^n)$ are temperate distributions, we can define the Fourier transform $\widehat{f}$ for $f \in L^p(\mathbb{R}^n)$ in the distributional sense when $p >2$. {\em For the rest of this paper, distribution will mean temperate distribution}.

We shall write $\text{supp}(\psi)$ to indicate the support of $\psi$, where depending on the context, $\psi$ is a function, measure, or distribution.

We conclude this section with a result that will be needed later.
\begin{Prop}\label{vanishingf}
If $E$ is a compact subset of $\mathbb{R}^n$, then there exists an $f \in \mathcal{S}(\mathbb{R}^n)$ for which $Z(f) =E$.
\end{Prop}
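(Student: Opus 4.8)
The plan is to reduce the statement to a purely real-variable construction and then invoke the fact, recalled above, that the Fourier transform is an isomorphism on $\mathcal{S}(\mathbb{R}^n)$. Since $Z(f)$ depends only on $\hat{f}$, and since for any $g \in \mathcal{S}(\mathbb{R}^n)$ the inverse transform $\widecheck{g}$ again lies in $\mathcal{S}(\mathbb{R}^n)$ with $\widehat{\widecheck{g}} = g$, it suffices to produce a single Schwartz function $g$ whose zero set $\{g = 0\}$ equals $E$. Then $f := \widecheck{g}$ satisfies $Z(f) = \{ \xi \mid \hat{f}(\xi) = 0 \} = \{ \xi \mid g(\xi) = 0 \} = E$, as required.

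Thus I reduce to the following: given compact $E$, construct $g \in \mathcal{S}(\mathbb{R}^n)$ with $g^{-1}(0) = E$ exactly. I would build $g$ as a product $g = g_0 \cdot G$, where $G(x) = e^{-\pi |x|^2}$ is the Gaussian and $g_0$ is a nonnegative $C^\infty$ function vanishing precisely on $E$ and having all of its derivatives bounded. Because $G > 0$ everywhere, the zero set of $g$ coincides with that of $g_0$, namely $E$; and because $g_0$ together with all its derivatives is bounded while $G$ and all its derivatives decay faster than any polynomial, Leibniz's rule shows that $x^\beta \partial^\alpha g$ is bounded for every pair of multi-indices $\alpha, \beta$, so that $g \in \mathcal{S}(\mathbb{R}^n)$.

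The heart of the argument is the construction of $g_0$. The complement $U = \mathbb{R}^n \setminus E$ is open, so I can write $U = \bigcup_{j} B_j$ as a countable union of open balls with $\overline{B_j} \subset U$ (for instance, all balls with rational center and rational radius whose closure lies in $U$). For each $j$ I choose $\psi_j \in C_c^\infty(\mathbb{R}^n)$ with $\psi_j > 0$ on $B_j$, $\psi_j = 0$ off $B_j$, and $0 \le \psi_j \le 1$, and set $g_0 = \sum_j c_j \psi_j$ with positive constants $c_j$ chosen so small that $c_j \sup_{|\alpha| \le j} \| \partial^\alpha \psi_j \|_\infty \le 2^{-j}$. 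Then for each fixed $\alpha$ the series $\sum_j c_j \partial^\alpha \psi_j$ converges uniformly, since its tail is dominated by $\sum_j 2^{-j}$; hence $g_0 \in C^\infty(\mathbb{R}^n)$ with every derivative bounded. Every point of $U$ lies in some $B_j$, where the corresponding term is strictly positive while all terms are nonnegative, so $g_0 > 0$ on $U$; and $g_0 = 0$ on $E$ because each $\psi_j$ vanishes there. Therefore $g_0^{-1}(0) = E$, completing the construction.

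The main obstacle is precisely this last step: arranging that the infinite sum defining $g_0$ converges to a $C^\infty$ function whose derivatives are uniformly controlled (so that the subsequent multiplication by the Gaussian genuinely lands in $\mathcal{S}(\mathbb{R}^n)$), while simultaneously guaranteeing that $g_0$ is strictly positive on \emph{all} of $U$ rather than on a mere dense subset. The coefficient choice $c_j \le 2^{-j} / \sup_{|\alpha| \le j} \| \partial^\alpha \psi_j \|_\infty$ resolves both issues at once, and the remaining verifications---the Leibniz estimate giving Schwartz decay of $g_0 G$, and the reduction through the Fourier isomorphism---are routine.
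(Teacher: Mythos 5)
Your proof is correct, and it takes a genuinely more elementary route than the paper. The paper also reduces, via the Fourier isomorphism on $\mathcal{S}(\mathbb{R}^n)$, to building a Schwartz function vanishing exactly on $E$, but its construction rests on the Whitney extension theorem: for each $x$ in a ball $B \setminus E$ it extracts a smooth $f_x$ vanishing on $E$ and positive near $x$, sums these with coefficients $a_n = n^{-2}[\sup_B(f_{x_n})]^{-1}$, then truncates by a cutoff and adds an auxiliary Schwartz function that is positive off a ball containing $E$ to get the zero set exactly right at infinity. You avoid Whitney entirely by covering the open complement $U = \mathbb{R}^n \setminus E$ with countably many rational balls whose closures lie in $U$ and using explicit bump functions, and you replace the cutoff-plus-auxiliary-function step by multiplication with a Gaussian, which settles the Schwartz decay in one stroke. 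Two things your version buys: it is self-contained (no extension theorem), and your coefficient choice $c_j \sup_{|\alpha| \le j} \Vert \partial^\alpha \psi_j \Vert_\infty \le 2^{-j}$ explicitly controls derivatives of every order, which is precisely what is needed for the infinite sum to be $C^\infty$ with bounded derivatives --- a point the paper's choice of $a_n$, which only controls sup norms of the $f_{x_n}$ themselves, leaves implicit. What the paper's approach buys in exchange is brevity modulo the quoted theorem, since Whitney hands it nonnegative smooth functions adapted to $E$ without any covering argument.
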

\begin{proof}
Let $B$ be an open ball containing $E$ and let $x \in B \setminus E$. The Whitney extension theorem produces a smooth function $f_x \colon B \rightarrow \mathbb{R}$ such that $f_x =0$ on $E$ and $f_x >0$ at $x$. For the purpose of this proof only, $f_x$ will mean the function defined above instead of the translate of $f$. For each $x \in B \setminus E$ there exists an open ball $B_x$ for which $f_x$ is positive on $B_x$. Now choose a countable subcover $B_{x_n}$ of $B \setminus E$. Let
\[ a_n = n^{-2}[\sup_B (f_{x_n})]^{-1}. \]
Then
\[ g = \sum_{n=1}^{\infty} a_n f_{x_n} \]
is a smooth function on $B$. Let $B_1$ be an open ball satisfying $E \subseteq B_1 \subseteq B.$ Deonte by $h$ the smooth function obtained by multiplying $g$ by a smooth function that equals one on $B_1$ and zero on $\mathbb{R}^n\setminus B$. Set $F = h+s$ where $s \in \mathcal{S}(\mathbb{R}^n)$ that is zero on $B_1$ and positive on $\mathbb{R}^n\setminus \overline{B_1}$, where $\overline{B_1}$ is the closure of $B_1$. Thus $F \in \mathcal{S}(\mathbb{R}^n)$ and can be expressed as $\widehat{f}$ for some $f \in \mathcal{S}(\mathbb{R}^n)$. The proof of the proposition is now complete since $F^{-1}(0) = E$.
\end{proof}

\section{Proofs of Theorems \ref{notrestrictionlowerdim} and \ref{nullitynotres}}\label{proofsnotrestrictthm}
Let $E$ be a compact set in $\mathbb{R}^n$ with induced measure $d\sigma$. Suppose $E$ has the $p$-restriction property. This is equivalent to the existence of a constant $C$ that depends on $p$ and $n$ and satisfies 
\begin{equation}\label{eq:contprestrict}
\Vert \hat{f} \Vert_{L^1(E)} \leq C \Vert f \Vert_{L^p}
\end{equation}
for all $f \in \mathcal{S}(\mathbb{R}^n)$. Condition (\ref{eq:contprestrict}) is equivalent to the dual condition
\begin{equation}\label{eq:adjcontext}
\Vert \widecheck{Fd\sigma}\Vert_{L^{p'}} \leq C \Vert F \Vert_{L^{\infty}(E)}
\end{equation}
for all smooth functions $F$ on $E$, and where $\widecheck{Fd\sigma}$ is the inverse Fourier transform of the measure $Fd\sigma$. Recall that the inverse Fourier transform of a finite Borel measure is $d\mu$
\[ \widecheck{d\mu}(x) = \int_{\mathbb{R}^n} e^{2\pi i (x \cdot \xi)} d\mu(\xi), \]
where $x \in \mathbb{R}^n$. Setting $F \equiv 1$ on $E$ we see from (\ref{eq:adjcontext}) that $\widecheck{d\sigma} \in L^{p'}(\mathbb{R}^n)$. We record this as:
\begin{Lem}\label{measureindual}
Let $1 < p <2$ and let $E$ be a compact set in $\mathbb{R}^n$ with induced measure $d\sigma$. If $E$ is a set of $p$-restriction, then $\widecheck{d\sigma} \in L^{p'}(\mathbb{R}^n)$.
\end{Lem}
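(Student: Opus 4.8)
The conclusion is an immediate specialization of the dual estimate (\ref{eq:adjcontext}): taking $F \equiv 1$ on $E$, so that $\widecheck{Fd\sigma} = \widecheck{d\sigma}$ and $\Vert F\Vert_{L^\infty(E)} = 1$, gives $\Vert\widecheck{d\sigma}\Vert_{L^{p'}} \le C$, whence $\widecheck{d\sigma}\in L^{p'}(\mathbb{R}^n)$. Since this is isolated as a lemma, the content I would actually spell out is the equivalence of (\ref{eq:contprestrict}) and (\ref{eq:adjcontext}), which I would prove as a formal-adjoint duality. First I would record the pairing identity: for $f\in\mathcal{S}(\mathbb{R}^n)$ and $F$ smooth on $E$, Fubini's theorem (valid because $f$ is Schwartz and $d\sigma$ is a finite measure on the compact set $E$) gives
\[ \int_E \hat{f}(\xi) F(\xi)\, d\sigma(\xi) = \int_{\mathbb{R}^n} f(x)\, \widecheck{Fd\sigma}(-x)\, dx = T_{\widecheck{Fd\sigma}}(f). \]
This exhibits $\mathcal{R}_E$ and the operator $F\mapsto \widecheck{Fd\sigma}$ as adjoints with respect to the $L^1(E)$--$L^\infty(E)$ pairing and the pairing $T_\phi$.

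Granting this identity, both directions are routine. For (\ref{eq:contprestrict}) $\Rightarrow$ (\ref{eq:adjcontext}), I would bound
\[ \lvert T_{\widecheck{Fd\sigma}}(f)\rvert \le \Vert\hat{f}\Vert_{L^1(E)}\,\Vert F\Vert_{L^\infty(E)} \le C\,\Vert f\Vert_{L^p}\,\Vert F\Vert_{L^\infty(E)}, \]
so $f\mapsto T_{\widecheck{Fd\sigma}}(f)$ is a bounded functional on the dense subspace $\mathcal{S}(\mathbb{R}^n)\subset L^p(\mathbb{R}^n)$; its representative in $(L^p)^\ast = L^{p'}$ agrees as a distribution with the bounded continuous function $\widecheck{Fd\sigma}$, hence equals it almost everywhere, yielding $\Vert\widecheck{Fd\sigma}\Vert_{L^{p'}}\le C\Vert F\Vert_{L^\infty(E)}$. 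For the converse I would use the duality between $C(E)$ and the finite Borel measures on $E$ to write $\Vert\hat{f}\Vert_{L^1(E)} = \sup\{\lvert\int_E \hat{f}F\,d\sigma\rvert : F\in C(E),\ \Vert F\Vert_{L^\infty(E)}\le 1\}$, approximate each such $F$ uniformly by smooth functions (Stone--Weierstrass), and then invoke (\ref{eq:adjcontext}) together with H\"older's inequality.

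The step I expect to be the main obstacle is the justification in the forward direction that the \emph{a priori} merely bounded continuous function $\widecheck{Fd\sigma}$ is genuinely the $L^{p'}$ representative of the functional it induces: one must pass from ``bounded functional on the dense subspace $\mathcal{S}(\mathbb{R}^n)$'' to ``$\widecheck{Fd\sigma}\in L^{p'}(\mathbb{R}^n)$'', and not merely to the existence of some a priori unrelated $L^{p'}$ representative. Once the adjoint identity and this identification are in place, specializing to $F\equiv 1$ completes the proof.
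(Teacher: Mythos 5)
Your proof is correct and is essentially the paper's own: the paper obtains the lemma in exactly the same way, by specializing the dual estimate (\ref{eq:adjcontext}) to $F \equiv 1$. The additional material you supply---the pairing identity proving the equivalence of (\ref{eq:contprestrict}) and (\ref{eq:adjcontext}), including the careful identification of the bounded continuous function $\widecheck{Fd\sigma}$ with its $L^{p'}$ representative---is detail the paper asserts without proof, and your treatment of it is sound.
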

\begin{Prop} \label{partialWiener}
Let $1 \leq p < 2$ and let $E$ be a compact subset of $\mathbb{R}^n$ with induced measure $d\sigma$. If $E$ is a set of $p$-restriction, then $I^p(E) \neq L^p(E)$.
\end{Prop}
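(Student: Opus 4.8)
The plan is to exhibit a single nonzero element of $\text{Ann}(I^p(E))$ sitting inside $L^{p'}(\mathbb{R}^n)$; its mere existence forces $I^p(E)$ to be a proper subspace of $L^p(\mathbb{R}^n)$. The natural candidate is $\widecheck{d\sigma}$, the inverse Fourier transform of the induced measure. For $1 < p < 2$, Lemma \ref{measureindual} guarantees $\widecheck{d\sigma} \in L^{p'}(\mathbb{R}^n)$, so it defines a bounded linear functional $T_{\widecheck{d\sigma}}$ on $L^p(\mathbb{R}^n)$; for $p = 1$ the same conclusion holds for free, since $\widecheck{d\sigma}$ is bounded by $\sigma(E) < \infty$ and hence lies in $L^{\infty}(\mathbb{R}^n) = L^{p'}(\mathbb{R}^n)$.

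First I would evaluate $T_{\widecheck{d\sigma}}$ on Schwartz functions. For $f \in \mathcal{S}(\mathbb{R}^n)$, unwinding the definitions of $T_{\widecheck{d\sigma}}$ and $\widecheck{d\sigma}$ and applying Fubini's theorem (which is justified because $\int_E \int_{\mathbb{R}^n} |f(-x)|\,dx\,d\sigma(\xi) = \sigma(E)\Vert f \Vert_{L^1} < \infty$) yields
\[ T_{\widecheck{d\sigma}}(f) = \int_{\mathbb{R}^n} f(-x) \int_E e^{2\pi i x \cdot \xi}\,d\sigma(\xi)\,dx = \int_E \widehat{f}(\xi)\,d\sigma(\xi). \]
In other words, on the dense subspace $\mathcal{S}(\mathbb{R}^n)$ the functional $T_{\widecheck{d\sigma}}$ coincides with $f \mapsto \int_E \mathcal{R}_E f\,d\sigma$, the composition of the restriction operator with integration against $d\sigma$.

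Next I would promote this identity from $\mathcal{S}(\mathbb{R}^n)$ to all of $L^p(\mathbb{R}^n)$. Since $E$ is a set of $p$-restriction, $\mathcal{R}_E$ extends to a bounded operator $L^p(\mathbb{R}^n) \to L^1(E)$, so $f \mapsto \int_E \mathcal{R}_E f\,d\sigma$ is a bounded functional on $L^p(\mathbb{R}^n)$. It agrees with the bounded functional $T_{\widecheck{d\sigma}}$ on the dense set $\mathcal{S}(\mathbb{R}^n)$, hence the two coincide everywhere on $L^p(\mathbb{R}^n)$. For $f \in I^p(E)$ one has $\mathcal{R}_E f = 0$ in $L^1(E)$, whence $T_{\widecheck{d\sigma}}(f) = 0$; that is, $\widecheck{d\sigma} \in \text{Ann}(I^p(E))$. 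To see that this functional is not identically zero, I would test it against the Gaussian $f(x) = e^{-\pi |x|^2}$, whose transform $\widehat{f}(\xi) = e^{-\pi|\xi|^2}$ is strictly positive, giving $T_{\widecheck{d\sigma}}(f) = \int_E e^{-\pi|\xi|^2}\,d\sigma(\xi) > 0$. A subspace annihilated by a nonzero bounded functional cannot be all of $L^p(\mathbb{R}^n)$, so $I^p(E) \neq L^p(\mathbb{R}^n)$.

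The only genuinely delicate point is the passage from Schwartz functions to arbitrary $f \in L^p(\mathbb{R}^n)$: one must interpret $\widehat{f}\mid_E$ for such $f$ as the element $\mathcal{R}_E f \in L^1(E)$ (which is meaningful precisely because $E$ is a set of $p$-restriction) and verify that the two a priori distinct bounded functionals truly agree off the dense set. Everything else — the Fubini interchange and the nonvanishing via a Gaussian — is routine. I would note in passing that, in view of Proposition \ref{annihilator} and the inclusion $T^p[f] \subseteq I^p(Z(f))$, this proposition is exactly the $L^p$ avatar of Wiener's theorem: it is what makes $Z(f)$ being a set of $p$-restriction obstruct the density of the translates of $f$.
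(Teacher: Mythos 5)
Your proof is correct and is essentially the paper's own argument: both exhibit $\widecheck{d\sigma} \in L^{p'}(\mathbb{R}^n)$ as a nonzero annihilator of $I^p(E)$, using the restriction inequality together with the density of $\mathcal{S}(\mathbb{R}^n)$ in $L^p(\mathbb{R}^n)$. The only cosmetic differences are that you work directly with the functional $T_{\widecheck{d\sigma}}$ rather than through convolution and Proposition \ref{annihilator}, and you make explicit two points the paper leaves implicit --- the nonvanishing of the functional (via the Gaussian) and the case $p=1$, which Lemma \ref{measureindual} does not formally cover.
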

\begin{proof}
Let $f \in I^p(E)$ and let $\phi(x) = \widecheck{d\sigma}(x)$. By Lemma \ref{measureindual}, $\phi(x) \in L^{p'}(\mathbb{R}^n)$. Let $f \in I^p(E)$ and let $(f_n)$ be a sequence of Schwartz functions that satisfy $\Vert f_n - f\Vert_{L^p} \rightarrow 0$. For $x \in \mathbb{R}^n$,
\[\vert f \ast \phi (x)\vert  = \lim_{n \rightarrow \infty}\vert f_n \ast \phi (x)\vert = \lim_{n \rightarrow \infty} \left\vert\int_{\mathbb{R}^n} e^{2\pi i(x \cdot \xi)} \widehat{f_n}(\xi) d\sigma(\xi)\right\vert. \]
Because $\widehat{f}\mid_E =0$ we obtain
\begin{eqnarray*}
 \lim_{n \rightarrow \infty} \left\vert \int_E e^{2\pi i (x \cdot \xi)} \widehat{f_n}(\xi)d\sigma(\xi)\right\vert & \leq & \lim_{n \rightarrow \infty} \int_E \vert (\widehat{f} - \widehat{f_n})(\xi) e^{2\pi i (x \cdot \xi)}\vert d\sigma(\xi) \\
                                                                                                                                                            & \leq & \lim_{n \rightarrow \infty}\Vert \widehat{f} - \widehat{f_n} \Vert_{L^1(E)}.
\end{eqnarray*}
Since  $\mathcal{R}_E$ has property $\mathcal{R}(E, p, 1), \lim_{n \rightarrow \infty}\Vert \widehat{f} - \widehat{f_n} \Vert_{L^1(E)} = 0$. Hence,  $f \ast \phi = 0$  and $\phi$ is a nonzero element in $\text{Ann}(I^p(E))$ by Proposition \ref{annihilator}. 
Thus $I^p(E) \neq L^p(\mathbb{R}^n)$. 
\end{proof}
The following corollary to Proposition \ref{partialWiener}, which is crucial for the proofs of Theorems \ref{notrestrictionlowerdim} and \ref{nullitynotres}, gives a useful criterion in terms of $T^p[f]$ to determine when $E$ is not a set of $p$-restriction.
\begin{Cor} \label{corpartialWiener}
Let $1 \leq p < 2$ and let $E$ be a compact subset of $\mathbb{R}^n$. If there exists an $f \in L^1(\mathbb{R}^n) \bigcap L^p(\mathbb{R}^n)$ for which $E \subseteq Z(f)$ and $T^p[f] = L^p(\mathbb{R}^n)$, then $E$ is not a set of $p$-restriction. 
\end{Cor}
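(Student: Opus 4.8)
The plan is to establish the contrapositive by combining Proposition \ref{partialWiener} with the simple observation that $I^p(E)$ is translation invariant. I would argue by contradiction: assume that $E$ \emph{is} a set of $p$-restriction, so that $I^p(E)$ is well-defined, and show that the existence of an $f$ as in the hypothesis forces $I^p(E) = L^p(\mathbb{R}^n)$, contradicting Proposition \ref{partialWiener}.

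First I would verify that $f \in I^p(E)$. Since $f \in L^1(\mathbb{R}^n)$, its Fourier transform $\hat{f}$ is continuous, and the hypothesis $E \subseteq Z(f)$ says exactly that $\hat{f}$ vanishes on $E$; together with $f \in L^p(\mathbb{R}^n)$ this places $f$ in $I^p(E)$. Next I would record two structural facts about $I^p(E)$. It is translation invariant: if $g \in I^p(E)$ and $y \in \mathbb{R}^n$, then $\widehat{g_y}(\xi) = e^{-2\pi i y \cdot \xi}\hat{g}(\xi)$ still vanishes on $E$, so $g_y \in I^p(E)$. It is also closed, being the kernel of the continuous restriction map $h \mapsto \hat{h}\mid_E$ from $L^p(\mathbb{R}^n)$ into $L^1(E)$, whose boundedness is precisely the $p$-restriction hypothesis recorded in inequality (\ref{eq:contprestrict}). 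Because $T^p[f]$ is by definition the smallest closed subspace of $L^p(\mathbb{R}^n)$ containing $f$ together with all of its translates, these two facts yield $T^p[f] \subseteq I^p(E)$.

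The hypothesis $T^p[f] = L^p(\mathbb{R}^n)$ then forces $I^p(E) = L^p(\mathbb{R}^n)$, which contradicts the conclusion $I^p(E) \neq L^p(\mathbb{R}^n)$ of Proposition \ref{partialWiener}. Hence $E$ cannot be a set of $p$-restriction, as claimed.

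I expect the only point demanding any care to be the justification that $I^p(E)$ is closed, since this is the single place where the $p$-restriction assumption is genuinely invoked; everything else is formal. In particular, all of the analytic content has already been absorbed into Proposition \ref{partialWiener} (via Lemma \ref{measureindual} and the identity $\phi = \widecheck{d\sigma} \in \operatorname{Ann}(I^p(E))$), so this corollary is in essence a repackaging of that proposition through the translation invariance of $I^p(E)$.
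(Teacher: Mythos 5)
Your proposal is correct and follows essentially the same route as the paper: assume $E$ is a set of $p$-restriction, deduce $T^p[f] \subseteq I^p(E)$ (the paper phrases this via $Z(f_y) = Z(f)$ for all translates, you via translation invariance and closedness of $I^p(E)$ — the same observation), and contradict Proposition \ref{partialWiener}. Your extra care about why $I^p(E)$ is closed is a detail the paper leaves implicit, not a difference in method.
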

\begin{proof}
Assume $E$ is a set of $p$-restriction. Then by Proposition \ref{partialWiener} $I^p(E) \neq L^p(\mathbb{R}^n)$. Since $Z(f_y) = Z(f)$ for all $y \in \mathbb{R}^n, T^p[f] \subseteq I^p(E)$ which contradicts our hypothesis $T^p[f] = L^p(\mathbb{R}^n)$.
\end{proof}
\subsection{Proof of Theorem \ref{notrestrictionlowerdim}}\label{prooflowerdim}
Theorem \ref{notrestrictionlowerdim} follows immediately by combining Corollary \ref{corpartialWiener} with \cite[Corollary 1]{Agran_Nara04}.
\subsection{Proof of Theorem \ref{nullitynotres}} \label{notnullityres}
We will prove Theorem \ref{nullitynotres} by proving a more general theorem. We start with a definition. Let $E$ be a closed subset of $\mathbb{R}^n$. We shall say that $E$ is {\em $p$-thin} if the only distribution $T$ that satisfies $\hbox{supp }T \subseteq E$ and $\widecheck{T} \in L^p(\mathbb{R}^n)$ is $T=0$. 
\begin{Thm}\label{pthinpres}
Let $1 < p < 2$ and let $E$ be a compact subset of $\mathbb{R}^n$. If $E$ is $p'$-thin, then $E$ is not a set of $p$-restriction.
\end{Thm}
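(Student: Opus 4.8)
The plan is to argue by contraposition. Rather than deriving the failure of $p$-restriction directly from $p'$-thinness, I would show that if $E$ \emph{is} a set of $p$-restriction, then $E$ cannot be $p'$-thin. By the definition of $p'$-thin, this reduces to producing a single nonzero distribution $T$ with $\operatorname{supp}(T) \subseteq E$ and $\widecheck{T} \in L^{p'}(\mathbb{R}^n)$.

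The witness for $p'$-thinness is already handed to us by the preliminary results: take $T = d\sigma$, the induced measure on $E$. First I would observe that, since $E$ is compact, $d\sigma$ is a finite positive Borel measure carried by $E$; hence it is a nonzero tempered distribution with $\operatorname{supp}(d\sigma) \subseteq E$. Second, under the standing assumption that $E$ is a set of $p$-restriction with $1 < p < 2$, Lemma \ref{measureindual} gives precisely $\widecheck{d\sigma} \in L^{p'}(\mathbb{R}^n)$. Thus $T = d\sigma$ meets both requirements in the definition of $p'$-thin while being nonzero, so $E$ is \emph{not} $p'$-thin.

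Assembling these observations establishes the contrapositive of the theorem: if $E$ is a set of $p$-restriction then $E$ is not $p'$-thin, which is logically equivalent to the assertion that if $E$ is $p'$-thin then $E$ is not a set of $p$-restriction.

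I do not anticipate a genuine obstacle here; the theorem is essentially a repackaging of Lemma \ref{measureindual} through the language of $p'$-thinness. The only point deserving a word of care is confirming that $d\sigma$ really qualifies as a nonzero tempered distribution supported in $E$: compactness of $E$ ensures $d\sigma$ is finite and therefore pairs continuously against Schwartz functions, while positivity of the induced measure ensures $d\sigma \neq 0$. With that verified, the result follows by a direct appeal to Lemma \ref{measureindual} together with an unwinding of the definition of $p'$-thin.
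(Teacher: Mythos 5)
Your proof is correct, and it takes a genuinely more direct route than the paper's. You prove the contrapositive by exhibiting the induced measure itself as the witness against $p'$-thinness: $p$-restriction forces $\widecheck{d\sigma} \in L^{p'}(\mathbb{R}^n)$ by Lemma \ref{measureindual}, and $d\sigma$ is a nonzero tempered distribution with $\text{supp}(d\sigma) \subseteq E$, so $E$ is not $p'$-thin. The paper instead routes the argument through its span-of-translates machinery: it chooses $f \in \mathcal{S}(\mathbb{R}^n)$ with $Z(f) = E$ (Proposition \ref{vanishingf}), uses $p'$-thinness together with Proposition \ref{annihilator} to show $T^p[f] = L^p(\mathbb{R}^n)$ --- any annihilator $\phi \in L^{p'}(\mathbb{R}^n)$ of $T^p[f]$ satisfies $f \ast \phi = 0$, hence $\text{supp}(\widehat{\phi}) \subseteq Z(f) = E$, hence $\phi = 0$ --- and then invokes Corollary \ref{corpartialWiener}, whose proof (via Proposition \ref{partialWiener}) is where $\widecheck{d\sigma}$ and Lemma \ref{measureindual} actually enter. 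Unwound, both arguments rest on the same duality fact, so yours is a legitimate shortcut; what the paper's longer route buys is the intermediate conclusion that the translates of a Schwartz function vanishing exactly on $E$ span all of $L^p(\mathbb{R}^n)$ when $E$ is $p'$-thin, which connects this theorem to the paper's broader theme and to the proofs of Theorems \ref{notrestrictionlowerdim} and \ref{nullitynotres}. One small caveat, which applies to the paper equally: positivity of $d\sigma$ alone does not give $d\sigma \neq 0$; what is really needed is that the induced measure on the compact set in question is nontrivial (and finite, so that it pairs with Schwartz functions), an implicit assumption the paper also makes when it declares $\widecheck{d\sigma}$ to be a nonzero annihilator in Proposition \ref{partialWiener}.
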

\begin{proof}
Assume that $E$ is $p'$-thin. Let $f \in \mathcal{S}(\mathbb{R}^n)$ with $Z(f) = E$. The theorem will follow from Corollary \ref{corpartialWiener} if we can show $T^p[f] = L^p(\mathbb{R}^n)$. Suppose instead that $T^p[f] \neq L^p(\mathbb{R}^n)$. By Proposition \ref{annihilator} there exists a nonzero $\phi \in L^{p'}(\mathbb{R}^n)$ for which $f \ast \phi = 0$, which implies $\text{supp }\hat{\phi} \subseteq Z(f)$ because $\widehat{f \ast \phi} = \hat{f}\hat{\phi}$. Due to our assumption $E$ is $p'$-thin, $\phi = 0$, a contradiction. Hence $T^p[f]  = L^p(\mathbb{R}^n)$.
\end{proof}
It was shown in \cite[Theorem 1]{Guo93} that if a set $E$ satisfies the hypothesis of Theorem \ref{nullitynotres} then $E$ is $p'$-thin. Therefore, $E$ is not a set of $p$-restriction and Theorem \ref{nullitynotres} is proved.

\section{$p$-spectral synthesis} \label{pspectralsyn}
We start with a definition. Let $E$ be a $k$-dimensional submanifold in $\mathbb{R}^n$ with induced Lebesgue measure $d\sigma$. We shall say that $E$ has the {\em $p$-approximate property} if for each distribution $T$ with $\text{supp }T \subseteq E$ and $\widecheck{T} \in L^p(\mathbb{R}^n)$, we can find a sequence of measures $T_j$ on $E$, absolutely continuous with respect to $d\sigma$, such that $\Vert \widecheck{T_j} - \widecheck{T} \Vert_{L^p} \rightarrow 0$ as $j \rightarrow \infty$. Our results on sets of $p$-spectral synthesis are an immediate consequence of previous work by Guo on sets that have the $p$-approximate property \cite{Guo89, Guo95}. In fact, it is stated in \cite{Guo89} that the $p$-approximate property is a variation of the spectral synthesis property. Sets with the $p$-restriction property allows us to make this statement more transparent. Specifically, Theorem \ref{papproximppspec} will show that $p$-spectral synthesis follows from the $p'$-approximate property for submanifolds with the $p$-restriction property. 

\subsection{Proof of Theorem \ref{psyn}}
Theorem \ref{psyn} will follow immediately by combining \cite[Theorem 1]{Guo89} and \cite[Theorem 2]{Guo95} with Theorem \ref{papproximppspec} below.

Suppose $E$ is a $k$-dimensional submanifold of $\mathbb{R}^n$ and let $d\sigma$ be the induced Lebesgue measure on $E$. Also assume that $E$ is a set of $p$-restriction for some $1 < p < 2$. Let $\Phi$ denote the closed subspace of $L^{p'}(\mathbb{R}^n)$ generated by 
\[ \{ \widecheck{Fd\sigma} \mid F \text{ is smooth on } E \}. \]
\begin{Thm} \label{papproximppspec}
Let $1 < p < 2$ and let $E$ be a compact, smooth $k$-dimensional submanifold of $\mathbb{R}^n$ and assume that $E$ has the $p$-restriction property. Let $d\sigma$ be the induced measure on $E$. If $E$ has the $p'$-approximate property, then $E$ is a set of $p$-spectral synthesis.
\end{Thm}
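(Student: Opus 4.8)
The plan is to establish $\overline{k(E)}^p = I^p(E)$ by passing to annihilators in $L^{p'}(\mathbb{R}^n)$ and invoking Hahn--Banach: two closed subspaces of $L^p(\mathbb{R}^n)$ coincide as soon as their annihilators agree. Since $k(E) \subseteq J(E) \subseteq I^p(E)$ and $I^p(E)$ is closed (it is the kernel of the continuous extension $f \mapsto \widehat{f}\mid_E$ furnished by the $p$-restriction hypothesis), we have $\overline{k(E)}^p \subseteq I^p(E)$, whence $\text{Ann}(I^p(E)) \subseteq \text{Ann}(\overline{k(E)}^p)$. Everything therefore reduces to proving the reverse inclusion $\text{Ann}(\overline{k(E)}^p) \subseteq \text{Ann}(I^p(E))$.

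First I would identify the larger annihilator spectrally. Using the multiplication (Parseval) formula one checks that for $\phi \in L^{p'}(\mathbb{R}^n)$ and $f\in\mathcal{S}(\mathbb{R}^n)$ the pairing satisfies $T_\phi(f) = \widehat{\phi}(\widehat{f})$ in the distributional sense. As $f$ ranges over $k(E)$, $\widehat{f}$ ranges over all Schwartz functions vanishing on a neighborhood of $E$; hence $\phi$ annihilates $k(E)$ exactly when $\widehat{\phi}$ kills every such test function, i.e. precisely when $\text{supp}(\widehat{\phi}) \subseteq E$. This yields the clean description
\[ \text{Ann}(\overline{k(E)}^p) = \{\phi \in L^{p'}(\mathbb{R}^n) : \text{supp}(\widehat{\phi}) \subseteq E\}. \]

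Next I would feed such a $\phi$ into the $p'$-approximate property. Setting $T = \widehat{\phi}$ gives $\text{supp}(T) \subseteq E$ and $\widecheck{T} = \phi \in L^{p'}(\mathbb{R}^n)$, so the hypothesis produces measures $T_j = F_j\,d\sigma$ on $E$, absolutely continuous with respect to $d\sigma$, with $\Vert\widecheck{T_j} - \phi\Vert_{L^{p'}} \to 0$. The crux is that each $\widecheck{T_j} = \widecheck{F_j d\sigma}$ already lies in $\text{Ann}(I^p(E))$, which I would verify by the exact computation of Proposition \ref{partialWiener}: for $f \in I^p(E)$ and Schwartz $f_n \to f$ in $L^p$,
\[ f \ast \widecheck{F_j d\sigma}(x) = \lim_{n\to\infty} \int_E e^{2\pi i(x\cdot\xi)} \widehat{f_n}(\xi) F_j(\xi)\, d\sigma(\xi), \]
and since $\widehat{f}\mid_E = 0$ while $\Vert\widehat{f_n} - \widehat{f}\Vert_{L^1(E)} \to 0$ by the $p$-restriction property, the right-hand side vanishes. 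Thus $f \ast \widecheck{T_j} = 0$ for all $f \in I^p(E)$, so $\widecheck{T_j} \in \text{Ann}(I^p(E))$ by Proposition \ref{annihilator}. Because $\text{Ann}(I^p(E))$ is norm-closed and $\widecheck{T_j} \to \phi$ in $L^{p'}$, we conclude $\phi \in \text{Ann}(I^p(E))$; the two annihilators coincide and $\overline{k(E)}^p = I^p(E)$.

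The step I expect to be the main obstacle is controlling the densities $F_j$ in that last computation: to bound $\int_E |\widehat{f_n} - \widehat{f}|\,|F_j|\,d\sigma$ by $\Vert F_j\Vert_{L^\infty(E)}\Vert\widehat{f_n} - \widehat{f}\Vert_{L^1(E)}$ I need the approximating measures to have bounded (say continuous) densities, whereas the $p'$-approximate property as stated only guarantees absolute continuity. I would resolve this either by extracting continuous densities from Guo's construction in \cite{Guo89, Guo95}, or by first proving $\widecheck{Fd\sigma} \in \text{Ann}(I^p(E))$ for smooth $F$, so that $\Phi \subseteq \text{Ann}(I^p(E))$ (here the dual restriction bound (\ref{eq:adjcontext}) keeps $\Vert\widecheck{Fd\sigma}\Vert_{L^{p'}}$ controlled), and then arranging the approximants $T_j$ to lie in $\Phi$; the norm-closedness of $\text{Ann}(I^p(E))$ absorbs the limit in either case.
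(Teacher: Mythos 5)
Your proposal is correct and takes essentially the same route as the paper's proof: both reduce to comparing annihilators in $L^{p'}(\mathbb{R}^n)$, identify $\text{Ann}(\overline{k(E)}^p)$ with the $\phi$ whose Fourier transform is supported in $E$, feed $\widehat{\phi}$ into the $p'$-approximate property, show the approximants $\widecheck{F d\sigma}$ annihilate $I^p(E)$ via the convolution computation of Proposition \ref{partialWiener} and Proposition \ref{annihilator}, and finish by norm-closedness. The obstacle you flag about the densities is handled in the paper exactly by your second proposed fix: it works with the closed span $\Phi$ of $\{\widecheck{Fd\sigma} \mid F \text{ smooth on } E\}$, proves $\Phi \subseteq \text{Ann}(I^p(E))$ for smooth densities, and takes the approximating measures (from Guo's construction) to have smooth densities.
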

\begin{proof}
Since $E$ is a set of $p$-restriction, $\widecheck{d\sigma} \in L^{p'}(\mathbb{R}^n)$ by Lemma \ref{measureindual}. We begin by showing showing $\Phi \subseteq \text{Ann}(I^p(E))$. Let $\phi \in \Phi$. We can assume that $\phi = \widecheck{d\mu}$, where $d\mu = F d\sigma$ for some smooth function $F$ on $E$. Let $f \in I^p(E)$, using the argument from Proposition \ref{partialWiener} we obtain that $f \ast \phi = 0$, which implies that  $\phi \in \text{Ann}(I^p(E))$ by Proposition \ref{annihilator}. 

Now let $\phi \in \text{Ann}(\overline{k(E)}^p)$. Since $\text{supp}(\widehat{\phi}) \subseteq E$ and $E$ has the $p'$-approximate property, there exists a sequence of measures $F_n d\sigma$, where $F_n$ is smooth on $E$, such that $\Vert \widecheck{F_n d\sigma} - \phi \Vert_{L^{p'}} \rightarrow 0$. Thus $\phi \in \Phi$, which implies $\text{Ann}(\overline{k(E)}^p) \subseteq \text{Ann}(I^p(E))$.  Clearly, $\text{Ann}(I^p(E)) \subseteq \text{Ann}(\overline{k(E)}^p)$. Therefore, $E$ is a set of $p$-spectral synthesis. 
\end{proof}
\subsection{$p$-spectral synthesis and the unit sphere}
We mentioned in the Introduction that for $n \geq 3, S^{n-1}$ is not a set of spectral synthesis. Using Theorem \ref{psyn} we will be able to show that there are $p$-values for which $S^{n-1}$ is a set of $p$-spectral synthesis. By \cite[Theorem 1]{Guo89}, $S^2$ has the $p'$-approximate property when $p' > 2$ and $n=3$; and $S^{n-1}$ has the $p'$-approximate property for $2 < p' \leq \frac{2n}{n-3}$ when $n \geq 4$. It follows from the Stein-Tomas theorem that $S^{n-1}$ is a set of $p$-restriction for $1 \leq p \leq \frac{2n+2}{n+3}$. Consequently, Theorem \ref{papproximppspec} yields that $S^2$ is a set of $p$-spectral synthesis for $1 < p \leq \frac{4}{3}$ and for $n \geq 4, S^{n-1}$ is a set of $p$-spectral synthesis for $\frac{2n}{n+3} \leq p \leq \frac{2n+2}{n+3}$. The upper bound in this inequality is probably not sharp, in fact it would not surprise us if it is $\frac{2n}{n+1}$. However, the lower bound is sharp. Indeed, using \cite[Lemma 2.3(ii)]{Guo93}  a distribution $\phi$ can be constructed with $\phi \in L^{p'}(\mathbb{R}^n) \mbox{for } p' > \frac{2n}{n-3}, \text{supp}(\widehat{\phi}) \subseteq S^{n-1}$ that satisfies for $f \in \mathcal{S}(\mathbb{R}^n)$, 
\[ \langle f, \phi \rangle = \int_{S^{n-1}} \frac{\partial \widehat{f}}{\partial x_n} \, d\sigma. \]
Since there exists $f \in \mathcal{S}(\mathbb{R}^n)$ with $\widehat{f} \mid_{S^{n-1}} = 0$ and $\frac{\partial \widehat{f} }{\partial x_n} \mid_{S^{n-1}} \neq 0, \phi \notin \text{Ann}(I^p(E))$. Thus, for $n \geq 4, S^{n-1}$ is not a set of $p$-spectral synthesis for $1 \leq p < \frac{2n}{n+3}$.

\bibliographystyle{plain}
\bibliography{prestrictpspec}

\end{document}